\documentclass[12pt]{amsart}
\usepackage[margin=1.5in]{geometry}
\usepackage{amsfonts, amsmath , parskip}
\usepackage[T1]{fontenc}
\usepackage{mathrsfs, enumitem}
\usepackage{hyperref}
\usepackage[utf8]{inputenc}
\usepackage{amssymb}
\usepackage{amsfonts}
\usepackage{blindtext}
\usepackage{amsmath}
\usepackage{amsthm}
\usepackage{color}
\usepackage{hyperref}
\usepackage{csquotes}
\usepackage{tikz-cd}
\usepackage{graphicx}
\usepackage{pgfplots}
\pgfplotsset{compat=1.15}
\usepackage{mathrsfs}
\usetikzlibrary{arrows}
\allowdisplaybreaks[4]
\newtheorem{theorem}{Theorem}[section]
\newtheorem{lemma}[theorem]{Lemma}

\newtheorem{proposition}[theorem]{Proposition}

\theoremstyle{definition}
\newtheorem{definition}[theorem]{Definition}

\theoremstyle{remark}
\newtheorem{remark}[theorem]{Remark}
\theoremstyle{plain}
\newcommand{\ignore}[1]{}

\newcommand{\R}{\mathbb R}

\newcommand{\Z}{\mathbb Z}

\newcommand{\C}{\mathbb C}

\newcommand{\calB}{{\mathcal{B}}}
\newcommand{\calC}{{\mathcal{C}}}
\newcommand{\calD}{{\mathcal{D}}}

\newcommand{\calG}{{\mathcal{G}}}
\newcommand{\calH}{{\mathcal{H}}}

\newcommand{\calM}{{\mathcal{M}}}
\newcommand{\calN}{{\mathcal{N}}}

\newcommand{\calV}{{\mathcal{V}}}

\newcommand{\norm}[1]{\lVert #1 \rVert}
\DeclareMathOperator{\tr}{tr}

\DeclareMathOperator{\id}{id}
\title[Failure of the noncommutative Rota endpoint]{
Failure of noncommutative Rota's theorem with $L\log L$ endpoint}
\author{Hiranmoy Ghorui}
\address{School of Mathematics, Indian Institute of Science Education and Research Thiruvananthapuram, Kerala 695551, India}
\email{hiran25@iisertvm.ac.in}

\author{Suman Mondal}
\address{School of Mathematics, Indian Institute of Science Education and Research Thiruvananthapuram, Kerala 695551, India}
\email{suman2024@iisertvm.ac.in}

\author{Samya Kumar Ray}
\address{The Institute of Mathematical Sciences, 4th Cross Street, CIT Campus, Tharamani, Chennai, Tamil Nadu 600113, India\\
Homi Bhabha National Institute, Training School Complex, Anushakti Nagar, Mumbai 400094, India}
\email{samya@imsc.res.in}
\subjclass[2020]{Primary 46L53; Secondary 46L51, 47A35}
\keywords{Noncommutative Rota theorem, factorizable Markov map,
bilateral almost uniform convergence, noncommutative Orlicz space}

\begin{document}

\begin{abstract}
We construct a factorizable Markov map $T$ on a hyperfinite finite von Neumann
algebra and a positive element $x\in L\log L$ for which the sequence
$T^n(T^*)^n x$ does not converge bilaterally almost uniformly. This answers a question of Anantharaman-Delaroche \cite{AD2006}.
\end{abstract}

\maketitle

\section{Introduction and main result}

Let $(X,\mu)$ be a probability space and let $T$ be a
$\mu$-preserving Markov operator. The following classical convergence
theorem is due to G.-C.~Rota \cite{Rota1962}.
\begin{theorem}[Rota]
For every $p>1$ and $f\in L_p(X,\mu)$, the sequence
\[
    \bigl(T^n(T^*)^n f\bigr)_{n\geq 0}
\]
converges $\mu$-almost everywhere.
\end{theorem}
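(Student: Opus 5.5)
The plan is Rota's original route: represent $T^n(T^*)^n f$ as a reversed martingale built from a dilation, then apply the martingale convergence theorem together with Doob's maximal inequality.

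\textbf{Step 1 (dilation).} Since $T$ is a $\mu$-preserving Markov operator, so is $T^*$, and both are positive contractions on every $L_p$ fixing constants. Form the product space $\Omega=X^{\mathbb N}$ with the Markov measure $P$ whose initial law is $\mu$ and whose transition kernel corresponds to $T^*$. This is a stationary Markov chain $(\xi_k)_{k\ge0}$, so $E[g(\xi_{k+1})\mid \xi_k]=(T^*g)(\xi_k)$. By stationarity, time reversal gives $E[g(\xi_k)\mid \xi_{k+1}]=(Tg)(\xi_{k+1})$. Let $\mathcal F_n=\sigma(\xi_n,\xi_{n+1},\dots)$, the decreasing sequence of future $\sigma$-algebras.

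\textbf{Step 2 (reversed martingale).} For $f\in L_p(X,\mu)$, put $M_n=E[f(\xi_0)\mid\mathcal F_n]$. By the Markov property this conditional expectation depends only on $\xi_n$. Iterating the reversed transition $n$ times gives $M_n=(T^n f)(\xi_n)$. That is not yet the desired sequence, so I would instead condition $h(\xi_n)$ with $h=(T^*)^n f$, and realize $T^n(T^*)^n f(\xi_0)$ directly. This is the key bookkeeping.

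Rota's actual formula fixes this by working on the two-sided stationary chain $(\xi_k)_{k\in\Z}$. There $(T^*)^n f(\xi_0)=E[f(\xi_n)\mid\xi_0]$. Since $f(\xi_n)$ is $\sigma(\xi_n)$-measurable, one then shows
\[
T^n(T^*)^n f(\xi_0)=E\bigl[f(\xi_0')\mid \mathcal G_n\bigr]\circ(\text{shift}),
\]
for a suitable decreasing family $\mathcal G_n$. Concretely, it is cleanest to use the operator identity: there are conditional expectations $E_0$ and $E_{\mathcal G_n}$ and an embedding $J$ with $T^n(T^*)^n=J^*E_{\mathcal G_n}J$, where $\mathcal G_n$ decreases in $n$. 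In the two-sided chain one takes $J f=f(\xi_0)$, and $\mathcal G_n=\sigma(\xi_k:k\le -n)$ after reindexing so that $J^*E_{\mathcal G_n}J$ reproduces the $n$-fold back-and-forth. Here $J^*=E[\,\cdot\mid\xi_0]$ read as a function on $X$.

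\textbf{Step 3 (convergence).} By the reversed martingale convergence theorem, $E_{\mathcal G_n}Jf$ converges a.e. and in $L_p$. Doob's inequality gives $\|\sup_n|E_{\mathcal G_n}Jf|\|_p\le \frac{p}{p-1}\|f\|_p$ for $p>1$. Positivity of $J^*$ then yields the maximal inequality
\[
\Bigl\|\sup_n|T^n(T^*)^nf|\Bigr\|_p\le\frac{p}{p-1}\|f\|_p .
\]
A.e. convergence follows by the Banach principle: convergence holds on a dense class, and the maximal inequality extends it to all of $L_p$. For the dense class I would use $f=J^*g$-type elements, or simply note that convergence of $E_{\mathcal G_n}Jf$ a.e. passes through $J^*$ by dominated conditional convergence, since the maximal function is in $L_p$.

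\textbf{Main obstacle.} The delicate point is Step 2: getting the precise dilation identity $T^n(T^*)^n=J^*E_nJ$ with $E_n$ decreasing. One must check the alternating-product structure carefully using stationarity and the Markov property; this is where the measure-preserving hypothesis enters. Everything after that is standard martingale theory.
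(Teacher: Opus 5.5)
The paper gives no proof of this statement. It is quoted as background from Rota's 1962 paper, so there is no internal argument to compare with. Your route is the classical dilation proof: write $T^n(T^*)^n$ as a compression of a reversed martingale, then apply Doob and conditional dominated convergence. It works, but Step~2 is more tangled than necessary and contains one false display.

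\textbf{Step 2.} The bookkeeping problem disappears if the one-sided chain is given transition operator $T$ rather than $T^*$, i.e.\ $E[g(\xi_{k+1})\mid\xi_k]=(Tg)(\xi_k)$. The Markov property and invariance of $\mu$ then give $E[f(\xi_0)\mid\mathcal F_n]=((T^*)^nf)(\xi_n)$ and $E[h(\xi_n)\mid\xi_0]=(T^nh)(\xi_0)$. Hence $E\bigl[E[f(\xi_0)\mid\mathcal F_n]\mid\xi_0\bigr]=(T^n(T^*)^nf)(\xi_0)$. Equivalently, apply your own computation, which yields $(T^*)^nT^n$, to the Markov map $T^*$.

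Your two-sided identity $T^n(T^*)^n=J^*E_{\mathcal G_n}J$ with $\mathcal G_n=\sigma(\xi_k:k\le-n)$ is also correct exactly as written, with no reindexing. Indeed $E[f(\xi_0)\mid\mathcal G_n]=((T^*)^nf)(\xi_{-n})$, and $E[h(\xi_{-n})g(\xi_0)]=\int h\,(T^*)^ng\,d\mu=\int(T^nh)\,g\,d\mu$. So what you call the main obstacle is a two-line check.

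The display $T^n(T^*)^nf(\xi_0)=E[f(\xi_0')\mid\mathcal G_n]\circ(\text{shift})$, however, should be deleted. Its left side is the compression $E[\,\cdot\mid\xi_0]$ of a reversed martingale, not itself one. For example, take $T=\tfrac12(I+R)$ with $R$ the rotation of $\Z/3\Z$ and $\bar f$ the mean of $f$. Then $T^n(T^*)^nf=\bar f+4^{-n}(f-\bar f)$, which is not a reversed martingale in law.

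\textbf{Step 3.} Keep the conditional dominated convergence option and drop the Banach-principle one, for which you never exhibit a dense class of known convergence. Set $F_n=E_{\mathcal G_n}Jf$. It converges a.s., and $|F_n|\le\sup_m|F_m|\in L_p\subset L_1$ by Doob. So $E[F_n\mid\xi_0]$ converges a.s., and since $\xi_0$ has law $\mu$, this is $\mu$-a.e.\ convergence of $T^n(T^*)^nf$ on $X$.

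\textbf{Step 1.} You tacitly assume that $T^*$ comes from a transition kernel. This holds on standard Borel spaces, to which one can reduce.

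\textbf{Connection to the paper.} The only genuinely commutative step in your argument is the domination $|J^*F_n|\le J^*\sup_m|F_m|$. With Doob's $L\log L$ inequality in place of the $L_p$ one, the same argument gives the $L\log L$ case the paper cites from Bufetov. The paper's block $QE_m\mathbf X$ is also a compression of a reversed martingale, but by an expectation $Q$ that does not commute with the filtration. Its peaks of size $(\log N)^2$, against an $L\log L$ size of order $\log N$, show that this domination step has no noncommutative substitute at the $L\log L$ level.
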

The same conclusion holds for every $f\in L\log L(X,\mu)$; see
\cite[Theorem~4]{Bufetov2002}. Anantharaman-Delaroche established a noncommutative analogue of Rota's
theorem for factorizable Markov maps on noncommutative $L_p$-spaces,
$p>1$, and raised the question of extending the theorem under minimal
hypotheses \cite[Corollary~4.2 and Theorem~6.5]{AD2006}. Hu \cite{Hu2009} extended this result to the $L\log^2 L(\calM,\tau)$ class of measurable operators. The purpose of this paper is to show that the $L\log L$ endpoint fails,
even within the factorizable class on a finite hyperfinite von Neumann algebra.
\begin{theorem}\label{thm:main}
There exists a hyperfinite finite von Neumann algebra $(\calM,\tau)$,
a factorizable Markov map $T:\calM\to\calM$, and $x\ge0$ with
\begin{equation}\label{eq:main-modular}
\tau\bigl(x\log(e+x)\bigr)\le\frac12
\end{equation}
such that, for every nonzero projection $e\in\calM$ and every $n_0\ge1$,
\begin{equation}\label{eq:main-unbounded}
\sup_{n\ge n_0}\norm{eT^n(T^*)^n(x)e}_\infty=\infty.
\end{equation}
In particular, $(T^n(T^*)^n(x))_{n\ge1}$ does not converge b.a.u.
\end{theorem}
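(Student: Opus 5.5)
We would build the counterexample from a known commutative failure of Rota-type behaviour together with noncommutative amplification, but the cleanest route is a direct construction. Take $\calM=\bigotimes_{k\ge1}(M_{d_k},\tr_{d_k})$, or more conveniently $\calM=L_\infty[0,1]\bar\otimes R$ with $R$ the hyperfinite $\mathrm{II}_1$ factor. Let $T$ be a composition $T=\Phi\circ\alpha$ of conditional expectations and trace-preserving automorphisms, so that $T$ is factorizable. The point is that $T^n(T^*)^n$ should act like a product of $2n$ conditional expectations, alternating between two families of subalgebras. We model the noncommutative phenomenon behind the failure of the Dunford--Schwartz/Stein maximal inequality at $L\log L$ for alternating projections: a product $E_{\calB}E_{\calC}E_{\calB}\cdots$ of conditional expectations onto non-commuting subalgebras is not controlled by a single martingale. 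The plan is to embed, for each $k$, a finite block $(\calM_k,\tau_k)$ carrying a factorizable Markov map $T_k$, an element $x_k\ge0$ with $\tau_k(x_k\log(e+x_k))\le 2^{-k-1}$, and times $n\ge k$ with $\norm{T_k^n(T_k^*)^n x_k}_\infty\ge k$ on a large portion of $\calM_k$. We then glue via $\calM=\bigoplus_k \lambda_k\calM_k$ or via an infinite tensor product, $T=\bigotimes T_k$ and $x=\sum x_k$ (suitably placed), so that \eqref{eq:main-modular} holds by summing and every projection $e$ "sees" infinitely many blocks.

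\textbf{Step 1 (finite block).} Construct $T_k=E_{\calB_k}\circ\mathrm{Ad}(u_k)$ on a matrix algebra $M_{N_k}$. Here $u_k$ is a unitary implementing a shift-like rotation between a diagonal masa $\calD$ and a Fourier-conjugate masa. Then $T_k^n(T_k^*)^n$ becomes a product of conditional expectations onto subalgebras $u^j\calB u^{-j}$. We choose $x_k$ as a small multiple of a minimal projection, $x_k=a_k p_k$ with $\tau(p_k)=\delta_k$. This gives $\tau(x_k\log(e+x_k))\approx a_k\delta_k\log a_k$, which we make $\le 2^{-k-1}$. Meanwhile the norm of $T^n(T^*)^n x_k$ stays of order $a_k\delta_k\cdot(\text{growth factor})$, and compressions by conditional expectations onto noncommuting algebras can concentrate mass. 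The target estimate is $\norm{e\,T_k^n(T_k^*)^n(x_k)e}\gtrsim a_k\delta_k\log(1/\delta_k)$, which is unbounded as $k\to\infty$ while the $L\log L$ modular is small. The mechanism is a logarithmic gain per scale, in the spirit of the Rota failure for $L_1$ adapted to sit exactly at the $L\log L$ borderline.

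\textbf{Step 2 (localization for all projections $e$).} Passing from $\norm{\cdot}_\infty$ to $\norm{e\cdot e}_\infty$ for arbitrary nonzero $e$ is the reason to use a tensor product with spreading. We would make $T=\bigotimes_k T_k$ on $\bigotimes_k M_{N_k}$ and take $x=\sum_k x_k\otimes 1$ with disjoint scales. The key lemma is that each large piece $y_k=T^{n_k}(T^*)^{n_k}x$ dominates $c_k\,q_k$, where $q_k$ is a projection that is "spread out" (e.g.\ $y_k\ge k\cdot(\text{a conditional expectation of } \ldots)$ close to a multiple of a nearly central element). From this, $\norm{ey_ke}\ge k\,\tau(e)$-type bounds follow via a Jensen/trace argument: $\norm{eye}_\infty\ge\tau(eye)/\tau(e)$, and $\tau(eye)$ is controlled because $y_k$ is almost a scalar times $1$ off a set of small trace. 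So we aim for $y_k\ge k(1-r_k)$ with $\tau(r_k)\to0$ and summable, which gives $\sup_{n\ge n_0}\norm{ey_ne}=\infty$ by Borel--Cantelli-type noncommutative reasoning. The final b.a.u.\ claim then follows: b.a.u.\ convergence would give a projection $e$ with $\sup_n\norm{eT^n(T^*)^nxe}_\infty<\infty$ for $n\ge n_0$, contradicting \eqref{eq:main-unbounded}.

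\textbf{Main obstacle.} The crux is Step 1 combined with positivity of $y_k$ spreading across almost all of the algebra. A factorizable, trace-preserving $T$ preserves the trace, so $\tau(y_k)=\tau(x_k)$ is small. An average of $\ge k$ on most of the space is therefore impossible. The spreading must instead come from many different times $n$, each making $y_n$ large on a different region. I would first try choosing a sequence of times $n_{k,j}$ so that the regions of $\{y_{n_{k,j}}\ge k\}$ are rotated copies under an ergodic automorphism, and cover everything infinitely often. With that requirement, the bound $\norm{ey_ne}$ large for some $n$ is derived via noncommutative Borel--Cantelli using the fact that these regions behave like nearly independent events, with $\sum\tau(\cdot)=\infty$. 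Making this independence precise while keeping factorizability is where I expect the real work.
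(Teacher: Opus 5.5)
\textbf{There is a genuine gap.} Your outer scheme (finite blocks, infinite tensor product, weights $2^{-k}$, Borel--Cantelli) matches the paper's. Your closing diagnosis is also right: in the paper the output at time $n(j)=N-j+2$ is large on a different rank-one region $v_jv_j^*$ for each $j$. But both load-bearing steps are missing, and the one quantitative target you state cannot produce divergence.

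\textbf{Step 1.} Put $m=a_k\delta_k$. The constraint $m\log(e+a_k)\le 2^{-k-1}$ forces $m\le 2^{-k-1}$ and $m\log a_k\le 2^{-k-1}$. Hence $a_k\delta_k\log(1/\delta_k)=m\log a_k+m\log(1/m)\le 2^{-k-1}+m\log(1/m)\to 0$, so your target is just the $L\log L$ size of $x_k$. A multiple of a minimal projection is in fact what the paper uses ($\mathbf X=Nv_0v_0^*$, so $a=N$, $\delta=1/(2N)$). But the output must be of order $a\delta(\log(1/\delta))^2$, and on a set of trace bounded below: $QE_j\mathbf X\ge c(\log N)^2v_jv_j^*$ for $j\in J_N$, with $\sum_{j\in J_N}v_jv_j^*$ of trace $1/8$. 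This extra logarithm is the real content, and it comes from an object you do not have. Take $R=H(1-\eta\eta^*/N)$, the compressed Hilbert matrix, and $W=(1-RR^*)^{1/2}$. The rows $v_j$ of $[R\ W]$ are orthonormal and orthogonal to $v_0$, and their partial row sums satisfy $\sum_{k\le j}R_{jk}\ge(2\pi)^{-1}\log N$ for $j\in J_N$.

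Your design $T_k=E_{\calB}\circ\mathrm{Ad}(u_k)$ also gives no tractable operator. You get $T_k^n(T_k^*)^n=E_0E_1\cdots E_{n-1}\cdots E_1E_0$, with $E_j$ the expectation onto $u_k^j\calB u_k^{-j}$. These subalgebras of $M_{N_k}$ are mutually isomorphic, so they cannot form a strictly nested chain. The palindrome therefore does not collapse to a martingale, and you offer no other estimate for it. The paper obtains the martingale from a cyclic coordinate $\ell_\infty(\Z/(N+2)\Z)\otimes M_{2N}$ that carries a different expectation $\phi_i$ at each site, so the palindrome collapses to $QE_mQ\mathbf X=QE_m\mathbf X$. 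The good set then has trace only $1/(8(N+2))$, which is repaired by taking $N+2$ independent tensor copies.

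\textbf{Step 2.} You correctly observe that $y_k\ge k(1-r_k)$ is incompatible with $\tau(y_k)=\tau(x_k)$. The bound $\norm{eye}_\infty\ge\tau(eye)/\tau(e)$ cannot control an arbitrary $e$ anyway. Your substitute (rotated regions and a noncommutative Borel--Cantelli lemma) is left open. The missing idea has three parts:
\begin{enumerate}
\item Force all $T^n(T^*)^nx$ into one abelian subalgebra $\calG$. In the paper this is the tensor product of copies of $\ell_\infty(\Z/(N+2)\Z)\otimes\calD_{\calV_N}$, since $Q$ is applied last.
\item Prove $\limsup_nT^n(T^*)^nx=\infty$ almost everywhere in $\calG$ by the classical Borel--Cantelli lemma for independent tensor factors.
\item Transfer to an arbitrary $e\in\calM$. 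Suppose $ey_ne\le Ke$ for $n\ge n_0$ and set $b=E_{\calG}(e)$. The Schwarz inequality and the bimodule property give $b^2y_n\le Kb$, so $y_n\le K/b$ on $\{b>0\}$. This forces $b=0$, hence $e=0$.
\end{enumerate}
Finally, for the b.a.u.\ conclusion you must also cut down by a spectral projection of the limit $y$, since $eye$ need not be bounded.
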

We refer to Section~\ref{sec:preliminaries} for the necessary notation.

Let us now briefly discuss the ideas behind Theorem \ref{thm:main}.
\begin{itemize}
\item \emph{Martingale construction.}
We construct a positive matrix $\mathbf X$, a martingale
$(E_m\mathbf X)$ on a finite-dimensional von Neumann algebra (of dimension $2N$), and a conditional expectation $Q$ onto an abelian
subalgebra such that
\[
\max_{m\in J_N} QE_m\mathbf X \ge c(\log N)^2
\]
for a certain set $J_N$ large enough and on a set with measure $1/8$, where $c>0$ is independent of $N$.
\item \emph{Lifting the martingale and realization by a single Markov map.}
We construct a factorizable Markov map $T_N$ on
$\bigoplus_{i=0}^{N+1}M_{2N}$ such that, for
\[z_N=(\mathbf X,0,\ldots,0),\]
\[
T_N^{N-m+2}(T_N^*)^{N-m+2}(z_N)
=(QE_m\mathbf X,0,\ldots,0),\ 1\le m\le N.
\]
The corresponding abelian projection where the diagonal element is large has trace $1/(8(N+2))$.
Taking $N+2$ independent tensor copies we construct a Markov operator of which the value of a Rota's sequence is order of $\log N$ on a projection with trace at least
$1-e^{-1/8}$.
\item \emph{Infinite tensor product and divergence.}
We take an infinite tensor product of these finite-dimensional examples to obtain a factorizable Markov operator $T$ and form a
summable positive combination after normalizations to obtain a special element $x\in L\log L$ in an abelian subalgebra for which
\[
\limsup_{n\to\infty} T^n(T^\ast)^n(x)=\infty
\]
almost everywhere. By Lemma \eqref{lem:compression} we then obtain
\[
\sup_{n\ge n_0}
\bigl\|eT^n(T^\ast)^n(x)e\bigr\|_\infty=\infty
\]
for every nonzero projection $e$ in the ambient algebra and every
$n_0\ge1$, proving Theorem \eqref{thm:main}.
\end{itemize}
\smallskip
\noindent\textbf{Organization of the paper.} Section~\ref{sec:preliminaries} collects background and definitions. In Section~3 we prove an abelian compression lemma, Sections~4--5 build a finitized counterexample, and Section~6 glues these blocks to prove Theorem~\ref{thm:main}. The final sections discuss scope and compare with related endpoint claims.

\section{Preliminaries}\label{sec:preliminaries}
Throughout the paper, $(\calM,\tau)$ is a finite von Neumann algebra with a
faithful normal tracial state. Set $\Phi(t)=t\log(e+t)$ where $t\ge0$ and define
$L\log L(\calM,\tau)$ to be the space of all  $\tau$-measurable operators $x$ on $\mathcal M$ satisfying
$\tau\Phi(|x|)<\infty$. 
We write $M_n$ for the set of all $n\times n$ complex matrices equipped with normalized trace denoted by $\operatorname{tr}_n.$
\begin{definition}
  A Markov map on $(\mathcal M,\tau)$ is a normal unital completely
positive map $T:\calM\to\calM$ satisfying $\tau\circ T=\tau$.  
\end{definition}
Let $T:\mathcal M\to\mathcal M$ be a Markov map. There exists a unique normal completely positive map $T^*:\mathcal M\to\mathcal M$ characterized by
\[
\tau(T(a)b)=\tau(aT^*(b)),\qquad a,b\in\calM.
\]
Clearly, $T^*$ is again a Markov map on $\mathcal M$ and is called the adjoint of $T.$ Moreover, both $T$ and $T^*$ extend to positive contractions on $L_p(\calM,\tau)$ for all $1\leq p\leq\infty.$ 
\begin{definition}A Markov map $T:\mathcal M\to\mathcal M$ is called factorizable if there exists a finite
tracial von Neumann algebra $\calN$ and normal unital trace-preserving
$*$-homomorphisms $j_0,j_1:\calM\to\calN$ such that
\[
T=j_0^*j_1.
\]
\end{definition}
This is the formulation of factorizability in the tracial setting introduced in
\cite[Definition~6.2]{AD2006}; see also
\cite[Definition~1.3 and Remark~1.4(a)]{HM11}.
We require the following property of factorizable Markov maps.
\begin{proposition}\label{prop:permanence}
Let $(\calM_k,\tau_k)$ be finite von Neumann algebras with faithful
normal tracial states, and let $S_k:\calM_k\to\calM_k$ be factorizable
Markov maps. On the tracial infinite tensor product
$\calM=\overline{\bigotimes}_{k\ge1}\calM_k$, there is a factorizable
Markov map $T$ satisfying
\begin{equation}\label{eq:tensor-map}
\begin{aligned}
&T(a_1\otimes\cdots\otimes a_m\otimes1\otimes\cdots)\\
&\qquad=S_1(a_1)\otimes\cdots\otimes S_m(a_m)\otimes1\otimes\cdots
\end{aligned}
\end{equation}
for all $m\ge1$ and $a_k\in\calM_k$, $1\le k\le m$.
Its trace adjoint satisfies
\[
\begin{aligned}
&T^*(a_1\otimes\cdots\otimes a_m\otimes1\otimes\cdots)\\
&\qquad=S_1^*(a_1)\otimes\cdots\otimes S_m^*(a_m)\otimes1\otimes\cdots.
\end{aligned}
\]
The same assertion holds for finite tensor products.
\end{proposition}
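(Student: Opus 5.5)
The plan is to build $T$ from the factorizations of the $S_k$. For each $k$ fix a finite tracial von Neumann algebra $(\calN_k,\sigma_k)$ and normal unital trace-preserving $*$-homomorphisms $j_0^{(k)},j_1^{(k)}:\calM_k\to\calN_k$ with $S_k=(j_0^{(k)})^*j_1^{(k)}$. Form the tracial infinite tensor product $\calN=\overline{\bigotimes}_{k\ge1}(\calN_k,\sigma_k)$, the GNS closure of the algebraic tensor product with respect to the product trace $\sigma=\bigotimes\sigma_k$. Since each $j_i^{(k)}$ is unital and trace preserving, the algebraic tensor product map $j_i=\bigotimes_k j_i^{(k)}$ is a unital $*$-homomorphism on elementary tensors $a_1\otimes\cdots\otimes a_m\otimes1\otimes\cdots$ that preserves the product traces. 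It is therefore isometric for the $L_2$-norms, and it extends to a normal unital trace-preserving $*$-homomorphism $j_i:\calM\to\calN$. To see this, note that it intertwines the GNS representations on the cyclic trace vectors. Equivalently, a trace-preserving $*$-homomorphism between tracial algebras extends from a weakly dense $*$-subalgebra because it is $L_2$-isometric, and its image closure is a von Neumann subalgebra isomorphic to $\calM$. I will then define $T=j_0^*j_1$, which is factorizable by definition.

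Next, I will compute $T$ on elementary tensors. The adjoint $j_0^*$ equals $j_0^{-1}\circ\mathbb E$, where $\mathbb E$ is the $\sigma$-preserving conditional expectation onto $j_0(\calM)$. So it suffices to show that for elementary tensors $b=b_1\otimes\cdots\otimes b_m\otimes1\otimes\cdots\in\calN$ we have
\[
j_0^*(b)=(j_0^{(1)})^*(b_1)\otimes\cdots\otimes(j_0^{(m)})^*(b_m)\otimes1\otimes\cdots .
\]
I will check this by duality. For any elementary tensor $a=a_1\otimes\cdots\otimes a_p\otimes1\otimes\cdots\in\calM$, factorization of the product trace gives
\[
\sigma(b\,j_0(a))=\prod_k\sigma_k\bigl(b_kj_0^{(k)}(a_k)\bigr)=\prod_k\tau_k\bigl((j_0^{(k)})^*(b_k)a_k\bigr),
\]
which equals $\tau$ of the proposed element times $a$. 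In this product, factors with $b_k=1$ or $a_k=1$ are handled using unitality and trace preservation, so that $(j_0^{(k)})^*(1)=1$. Elementary tensors span a weakly dense subalgebra, and $j_0^*$ is normal and contractive in $L_1$. This determines $j_0^*(b)$, and applying it with $b=j_1(a)$ gives the displayed formula for $T$.

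For the adjoint, the main step is to use $T^*=(j_0^*j_1)^*=j_1^*j_0$ and repeat the identical computation with the roles of $j_0$ and $j_1$ swapped. This yields the tensor formula with $S_k^*=(j_1^{(k)})^*j_0^{(k)}$. The finite tensor product case is the same argument with finitely many factors, or the special case where $\calM_k=\C$ for large $k$. The main technical point is the first step, extending the infinite tensor product of trace-preserving homomorphisms to a normal homomorphism. I will handle it through the $L_2$-isometry and GNS argument above, using that the product state is a faithful normal trace on the tensor product closure. Markovianity of $T$, meaning it is unital, completely positive and trace preserving, is then automatic from the factorized form.
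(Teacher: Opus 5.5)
Your proposal is correct and follows the paper's proof: you tensor the individual factorizations $S_k=(j_0^{(k)})^*j_1^{(k)}$ into $\overline{\bigotimes}_k\calN_k$ to get normal unital trace-preserving $*$-homomorphisms $j_0,j_1$, and you set $T=j_0^*j_1$. You go further than the paper by explicitly checking the tensor formulas for $T$ and $T^*=j_1^*j_0$ through the duality computation on elementary tensors, which the paper leaves implicit; the uniqueness step there rests on weak$^*$-continuity of $a\mapsto\tau(ya)$ and faithfulness of $\tau$, not on normality of $j_0^*$.
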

\begin{proof}
We choose tracial factorizations $S_k=j_{0,k}^*j_{1,k}$ through
$(\calN_k,\sigma_k)$.
The following maps
\[
j_i=\overline{\bigotimes}_{k\ge1}j_{i,k}:
\calM\longrightarrow\overline{\bigotimes}_{k\ge1}\calN_k,
\qquad i=0,1,
\]
are normal unital trace-preserving $*$-homomorphisms.
Thus $T=j_0^*j_1$ is factorizable. This completes the proof of the proposition.
\end{proof}
For noncommutative analogues of almost everywhere convergence, we refer to
Lance \cite{Lance1976} and Jajte \cite{Jajte1985}.
\begin{definition}
 Let $(y_n)$ be a sequence of $\tau$-measurable operators. We say $(y_n)$ converges bilaterally almost uniformly, abbreviated b.a.u., to $y$ if, for every
$\varepsilon>0$, there is a projection $e\in\calM$ such that
\[
\tau(1-e)<\varepsilon,\qquad
\norm{e(y_n-y)e}_\infty\longrightarrow0.
\]   
\end{definition}
\begin{definition}
Let $\mathcal N$ be a von Neumann subalgebra of $\mathcal M.$ Then there exists a unique trace-preserving normal contractive completely positive projection $\mathcal E:\mathcal M\to\mathcal N$ satisfying $\mathcal{E}(axb)=a\mathcal{E}(x)b$ for all $a,b\in\mathcal N$ and $x\in\mathcal M,$ which is called the conditional expectation of $\mathcal M$ on $\mathcal N.$ 
\end{definition} 

\section{An obstruction to b.a.u. convergence via abelian sub-algebra}
We first record an elementary lemma which allows us to lift divergence result from an abelian von Neumann subalgebra to the whole von Neumann algebra. Our argument is inspired by the proof of \cite[Lemma~4.5]{CXY13}.
\begin{lemma}\label{lem:compression}
Let $\calG\subseteq\calM$ be a unital abelian von Neumann subalgebra, and let
$(y_n)$ be a sequence in $L_1(\calG)_+$. Suppose that
\begin{equation}\label{eq:scalar-divergence}
\limsup_{n\to\infty}y_n=\infty
\qquad\text{almost everywhere in }\calG.
\end{equation}
Then, for every nonzero projection $e\in\calM$ and every $n_0\ge1$,
\[
\sup_{n\ge n_0}\norm{ey_ne}_\infty=\infty.
\]
In particular, $(y_n)$ does not converge b.a.u. in $\calM$.
\end{lemma}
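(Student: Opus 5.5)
The plan is to argue by contradiction and push the hypothetical bound down to $\calG$ through the conditional expectation $\mathcal E_\calG:\calM\to\calG$. The key auxiliary element will be $a=\mathcal E_\calG(e)\in\calG$. Suppose that for some nonzero projection $e\in\calM$ and some $n_0\ge1$ we have $\sup_{n\ge n_0}\norm{ey_ne}_\infty=C<\infty$. Then $a$ satisfies $0\le a\le 1$ and $\tau(a)=\tau(e)>0$. Consequently, for some $\delta>0$ the spectral projection $f=\mathbf 1_{[\delta,1]}(a)\in\calG$ is nonzero. The goal is to show that $y_n a\le C$ pointwise in $\calG$ for every $n\ge n_0$. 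Since $y_n\ge0$, this gives $y_n f\le (C/\delta) f$ for all $n\ge n_0$. That contradicts \eqref{eq:scalar-divergence} on the set $f$, which has positive measure.

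To get $y_na\le C$, first treat a bounded $y\in\calG_+$. The C$^*$-identity gives
\[
\norm{eye}_\infty=\norm{y^{1/2}e}_\infty^2=\norm{y^{1/2}ey^{1/2}}_\infty .
\]
Hence $y^{1/2}ey^{1/2}\le C\cdot 1$. Applying the positive unital bimodule map $\mathcal E_\calG$, and using that $y^{1/2}\in\calG$, we get
\[
y^{1/2}\,\mathcal E_\calG(e)\,y^{1/2}=y^{1/2}ay^{1/2}=ya\le C ,
\]
where the last equality uses that $\calG$ is abelian.

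The expected obstacle is that $y_n$ is only in $L_1(\calG)_+$, possibly unbounded. In that case the quantity $\norm{ey_ne}_\infty$ must be read as follows: it equals the square of $\norm{y_n^{1/2}e}_\infty$, and its finiteness means that $y_n^{1/2}e$ is bounded. I would handle this by truncation. Let $p_k=\mathbf 1_{[0,k]}(y_n)\in\calG$. Then $y_np_k$ is bounded, and since $p_k$ commutes with $y_n$ and is a contraction,
\[
\norm{(y_np_k)^{1/2}e}_\infty=\norm{p_ky_n^{1/2}e}_\infty\le\norm{y_n^{1/2}e}_\infty ,
\]
so $\norm{e\,y_np_k\,e}_\infty\le C$. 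The bounded case then yields $y_np_ka\le C$ for every $k$. Letting $k\to\infty$, with $p_k\uparrow 1$, gives $y_na\le C$ as measurable functions on $\calG$.

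Finally, the failure of b.a.u. convergence follows directly. If $(y_n)$ converged b.a.u. to some $y$, then for $\varepsilon<1$ there would be a nonzero projection $e$ with $\norm{e(y_n-y)e}_\infty\to0$. Here $y$ is $\tau$-measurable, and I would shrink $e$ (keeping it nonzero) so that $eye$ is bounded. This gives $\sup_{n\ge n_0}\norm{ey_ne}_\infty<\infty$ for large $n_0$, contradicting what was just proved.
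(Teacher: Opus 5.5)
Your proof is correct and follows essentially the same route as the paper. Both arguments assume a uniform bound, push it down to $\calG$ through $b=E_{\calG}(e)$ with truncation to handle unbounded $y_n$, obtain $y_n\le C/b$ on $\{b>0\}$ (which contradicts the a.e.\ divergence since $\tau(b)=\tau(e)>0$), and prove the b.a.u.\ part by intersecting with a spectral projection of $|y|$. The only difference is how the key inequality is derived: you get $y\,E_{\calG}(e)\le C$ from the C$^*$-identity $\norm{eye}_\infty=\norm{y^{1/2}ey^{1/2}}_\infty$ and the bimodule property, while the paper gets the equivalent $b^2y\le Kb$ from the Kadison--Schwarz inequality applied to $y^{1/2}e$.
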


\begin{proof}
Suppose that $ey_ne\le Ke$ for $n\ge n_0$, where $K<\infty$.
Let $E_{\calG}:\calM\to\calG$ be the trace-preserving conditional expectation
and put $b=E_{\calG}(e)$. For $r>0$, the bounded truncation
$y_{n,r}=\min(y_n,r)$ belongs to $\calG$ and satisfies $ey_{n,r}e\le Ke$.
Applying the Schwarz inequality for the completely positive map $E_{\calG}$
to $y_{n,r}^{1/2}e$, and using its bimodule property, gives
\[
\begin{aligned}
b^2y_{n,r}
&=E_{\calG}(y_{n,r}^{1/2}e)^*E_{\calG}(y_{n,r}^{1/2}e)\\
&\le E_{\calG}(ey_{n,r}e)\le Kb.
\end{aligned}
\]
Let $r\to\infty$ in the abelian algebra. On $\{b>0\}$ this implies
$y_n\le K/b$ for every $n\ge n_0$, contradicting
\eqref{eq:scalar-divergence} unless $b=0$. Thus $\tau(e)=\tau(b)=0$,
and faithfulness gives $e=0$.

For the final assertion, suppose $y_n\to y$ b.a.u. Choose a projection $e_1$
with $\tau(1-e_1)<1/4$ such that
$\norm{e_1(y_n-y)e_1}_\infty\to0$. Since $y$ is measurable, choose a spectral
truncation $q$ of $|y|$ with $\tau(1-q)<1/4$ and $yq$ bounded.
Then $e=e_1\wedge q$ is nonzero and $eye$ is bounded. The compressed sequence
$(ey_ne)$ consequently has a uniformly bounded tail, which has just been excluded.
\end{proof}

\section{Construction of a martingale and diagonal estimate}
Every matrix algebra in this section is equipped with its normalized trace.
Fix an integer $N\ge16$ divisible by four. Let
$
\eta=(1,\ldots,1)^t\in\R^N
$
and define the real matrix $H\in M_N$ by
\begin{equation}\label{eq:hilbert-matrix}
H_{jk}=\begin{cases}
\dfrac{1}{\pi(j-k)},&j\ne k,\\[4pt]
0,&j=k.
\end{cases}
\end{equation}
The norm estimate below can be obtained from \cite[Theorem~2, inequality~(1.6)]{MontgomeryVaughan1974} and \cite[Lemma~2.4]{Xia2023}; we include a proof for the reader's convenience.
\begin{lemma}\label{lem:H-contraction}
The matrix $H$ in \eqref{eq:hilbert-matrix} satisfies $\norm{H}\le1$.
\end{lemma}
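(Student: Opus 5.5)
We realize $H$ as a compression of a convolution operator on $\ell_2(\Z)$ whose symbol has sup norm at most $1$. Consider the sequence $h:\Z\to\R$ given by $h(0)=0$ and $h(n)=\frac{1}{\pi n}$ for $n\ne0$. Then $H_{jk}=h(j-k)$ for $1\le j,k\le N$. Hence $H=P C_h P$, where $C_h$ is convolution by $h$ on $\ell_2(\Z)$ and $P$ is the orthogonal projection onto $\ell_2(\{1,\dots,N\})$. Since $\norm{PC_hP}\le\norm{C_h}$, it suffices to show $\norm{C_h}\le1$. Because $h$ is not absolutely summable, we will define $C_h$ through Fourier multipliers. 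This is harmless here, since only the finite section $PC_hP$ matters, and there the bilinear form is a finite sum.

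The key step is computing the symbol. Take $\hat h(\theta)=\sum_{n\ne0}\frac{e^{in\theta}}{\pi n}$, understood as an $L_2(-\pi,\pi)$ function (an $L_2$-limit of symmetric partial sums). The classical identity
\[
\sum_{n\ge1}\frac{\sin n\theta}{n}=\frac{\pi-\theta}{2},\qquad 0<\theta<2\pi,
\]
gives $\hat h(\theta)=\frac{2i}{\pi}\sum_{n\ge1}\frac{\sin n\theta}{n}=\frac{i}{\pi}(\pi-\theta)$ on $(0,2\pi)$. On $(-\pi,\pi)$ this reads $\hat h(\theta)=\frac{i}{\pi}(\operatorname{sign}(\theta)\pi-\theta)$, so $|\hat h(\theta)|=1-\frac{|\theta|}{\pi}\le1$. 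We will verify the identity by checking that the Fourier coefficients of the sawtooth $\frac{i}{\pi}(\operatorname{sign}\theta\,\pi-\theta)$ are exactly $h(n)$, which is a one-line integration by parts.

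To conclude, for finitely supported vectors $u,v\in\C^N$ with trigonometric polynomials $U(\theta)=\sum_k u_ke^{ik\theta}$ and $V$ defined similarly, Parseval gives
\[
\langle Hu,v\rangle=\sum_{j,k}h(j-k)u_k\overline{v_j}=\frac1{2\pi}\int_{-\pi}^{\pi}\hat h(\theta)U(\theta)\overline{V(\theta)}\,d\theta .
\]
The right side is bounded by $\norm{\hat h}_\infty\norm u_2\norm v_2\le\norm u_2\norm v_2$ by Cauchy--Schwarz. The equality itself holds because the integral of $\hat h$ against $e^{i(k-j)\theta}$ returns the Fourier coefficient $h(j-k)$, and the sums are finite. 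This gives $\norm H\le1$. The normalized trace on $M_N$ plays no role, since the operator norm is independent of trace normalization.

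The main obstacle is the careful handling of the sign convention and of the non-absolute convergence of the series. Working directly with Fourier coefficients of the bounded sawtooth function avoids pointwise convergence issues entirely. As a fallback, a Montgomery--Vaughan/Hilbert inequality argument would give the same bound.
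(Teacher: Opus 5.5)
Your proof is correct and is essentially the paper's argument: the paper also realizes $H$ as the compression to $\operatorname{span}\{e^{i\theta},\ldots,e^{iN\theta}\}$ of multiplication by the sawtooth $m(\theta)=i(\pi-\theta)/\pi$ on $L_2(\mathbb T)$, computes its Fourier coefficients $1/(\pi r)$ by integration by parts, and concludes $\norm{H}\le\norm{m}_\infty=1$. Your convolution-on-$\ell_2(\Z)$ and Parseval formulation is just the Fourier-transformed version of that same compression.
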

\begin{proof}
The function
\[
m(\theta)=\frac{i(\pi-\theta)}{\pi},\qquad 0<\theta<2\pi,
\]
has essential supremum one. With the convention
\[
\widehat m(r)=\frac1{2\pi}\int_0^{2\pi}m(\theta)e^{-ir\theta}\,d\theta,
\]
integration by parts gives $\widehat m(0)=0$ and
$\widehat m(r)=1/(\pi r)$ for $r\ne0$. Note that $T_m:L_2(\mathbb T)\to L_2(\mathbb T)$ defined by $T_m(f):=mf$ has operator norm $\|m\|_\infty=1$. Let $\calH=\operatorname{span}\{e^{i\theta},\ldots,e^{iN\theta}\}$ and $\calB=(e^{i\theta},\ldots,e^{iN\theta})$. If $P:L_2(\mathbb T)\to\calH$ is the orthogonal projection and $i:\calH\to L_2(\mathbb T)$ is the inclusion, then $PT_mi$ is a contraction whose matrix in $M_{\calB}$ is $H$.
\end{proof}
Put
\[
R=H(1-\frac{\eta\eta^*}{N}),\qquad W=(1-RR^*)^{1/2}.
\]
Both matrices are real, $\norm{R}\le1$, and $R\eta=0$.
Regard the rows of the block matrix $[R\ W]\in M_{N,2N}$ as vectors $v_1,\ldots,v_N\in\C^{2N}$.
They are orthonormal because $RR^*+WW^*=1$. They are also orthogonal to
\[
v_0=\frac{1}{\sqrt{N}}(\eta,0)^t.
\]
Complete $v_0,v_1,\ldots,v_N$ to an ordered orthonormal basis $\calV_N$
of $\C^{2N}$, where
\[
\calV_N=(v_0,v_1,\ldots,v_{2N-1}).
\]Let $V=(v_0\ v_1\ \cdots\ v_{2N-1})$ be the unitary matrix whose columns
are these basis vectors, and let $\Delta:M_{2N}\to\calD_{2N}$ be the conditional expectation onto
the diagonal entries in the usual basis, where $\mathcal D_{2N}$ is the diagonal subalgebra of $M_{2N}.$
Let $\calD_{\calV_N}=V\calD_{2N}V^*$ be a subalgebra of $M_{2N}.$ Let
$Q:M_{2N}\to\calD_{\calV_N}$ be the trace-preserving conditional expectation given by \begin{equation}\label{defnofQ}
Q(A):=V\Delta(V^*AV)V^*,\qquad A\in M_{2N}.
\end{equation}
 Set
\begin{equation}\label{eq:X}
\mathbf{X}=Nv_0v_0^*=\begin{pmatrix}\eta\eta^*&0\\0&0\end{pmatrix}.
\end{equation}
In particular,
\begin{equation}\label{eq:X-properties}
Q\mathbf{X}=\mathbf{X},\qquad \tr_{2N}(\mathbf{X})=\frac12,\qquad \norm{\mathbf{X}}_\infty=N.
\end{equation}
For $1\le m\le N$, let $E_m$ be the trace-preserving conditional expectation onto
\[
M_m\oplus\C^{2N-m}\subseteq M_{2N}
\]
in the original coordinate basis. More precisely,
\begin{equation}\label{mthconditionalexpec}
E_m(A)=\begin{pmatrix}(a_{ij})_{1\le i,j\le m} & 0 \\
0 & \operatorname{diag}(a_{m+1,m+1},\ldots,a_{2N,2N})
\end{pmatrix}.
\end{equation}
In particular,
\begin{equation}\label{eq:nesting}
E_mE_\ell=E_{\min(m,\ell)}.
\end{equation}
Throughout the rest of the article we will use the above notations.
\begin{proposition}\label{prop:matrix-peaks}
Set
\[
J_N=\{N/4,N/4+1,\ldots,N/2-1\},\qquad c=\frac1{4\pi^2}.
\]
Then for every  $j\in J_N$,
\begin{equation}\label{eq:matrix-peak}
QE_j\mathbf{X}\ge c(\log N)^2v_jv_j^*.
\end{equation}
Moreover, $\tr_{2N}\bigl(\sum_{j\in J_N}v_jv_j^*\bigr)=1/8$.
\end{proposition}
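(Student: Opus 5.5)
The plan is to use the explicit structure of $Q$ and reduce \eqref{eq:matrix-peak} to a lower bound on a single scalar. Since $V$ is unitary with columns $v_k$, definition \eqref{defnofQ} gives
\[
Q(A)=\sum_{k=0}^{2N-1}\langle Av_k,v_k\rangle\, v_kv_k^*,\qquad A\in M_{2N}.
\]
Because $E_j$ is positive, $E_j\mathbf X\ge0$, so every coefficient in this sum is nonnegative. Dropping all terms except $k=j$ therefore yields
\[
QE_j\mathbf X\ge \langle E_j\mathbf X v_j,v_j\rangle\, v_jv_j^*.
\]
Note that $j\in J_N$ satisfies $1\le j\le N$, so $v_j$ is a row of $[R\ W]$. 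It thus suffices to show $\langle E_j\mathbf X v_j,v_j\rangle\ge (\log N)^2/(4\pi^2)$.

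Next I compute $E_j\mathbf X$ from \eqref{mthconditionalexpec} and \eqref{eq:X}. Write $\mathbf 1_j=\sum_{i\le j}e_i$. Then
\[
E_j\mathbf X=\mathbf 1_j\mathbf 1_j^*+\sum_{i=j+1}^{N}e_ie_i^*\ \ge\ \mathbf 1_j\mathbf 1_j^*.
\]
Hence $\langle E_j\mathbf Xv_j,v_j\rangle\ge|\langle v_j,\mathbf 1_j\rangle|^2$. The first $N$ coordinates of $v_j$ form the $j$-th row of $R=H-H\eta\eta^*/N$, so $\langle v_j,\mathbf 1_j\rangle=\sum_{k\le j}R_{jk}$. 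With $h_n=\sum_{i=1}^n 1/i$, the first part gives
\[
\sum_{k<j}H_{jk}=\frac{h_{j-1}}{\pi}.
\]
The rank-one correction contributes $\frac{j}{N}(H\eta)_j$, where
\[
(H\eta)_j=\frac1\pi\bigl(h_{j-1}-h_{N-j}\bigr).
\]
Combining the two,
\[
\sum_{k\le j}R_{jk}=\frac1\pi\Bigl[\Bigl(1-\frac jN\Bigr)h_{j-1}+\frac jN\,h_{N-j}\Bigr],
\]
which is a convex combination of $h_{j-1}$ and $h_{N-j}$ divided by $\pi$.

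For $j\in J_N$ we have $j-1\ge N/4-1$ and $N-j> N/2$. Using $h_n\ge\log(n+1)$, both harmonic numbers are at least $h_{N/4-1}\ge\log(N/4)$. Since $N\ge16$, we get $\log(N/4)\ge\frac12\log N$. Therefore $\sum_{k\le j}R_{jk}\ge \log N/(2\pi)$, and squaring gives the constant $c=1/(4\pi^2)$. The main point to handle carefully is this sign and size bookkeeping of the correction term $-\frac jN(H\eta)_j$. Writing it as the convex combination above avoids any cancellation issue.

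Finally, the $v_j$ with $j\in J_N$ are $N/4$ orthonormal vectors. So $\sum_{j\in J_N}v_jv_j^*$ is a projection of rank $N/4$, and its normalized trace in $M_{2N}$ is $(N/4)/(2N)=1/8$.
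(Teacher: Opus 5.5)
Your proof is correct and follows the paper's argument essentially step for step. You reduce to the coefficient $\langle E_j\mathbf X v_j,v_j\rangle$, making the paper's ``positive and diagonal in the basis containing $v_j$'' step explicit via $Q(A)=\sum_k\langle Av_k,v_k\rangle v_kv_k^*$; you drop the tail $\sum_{i=j+1}^N e_ie_i^*$; you bound $\sum_{k\le j}R_{jk}$ as the same convex combination of $h_{j-1}/\pi$ and $h_{N-j}/\pi$, each at least $\log N/(2\pi)$; and you obtain the trace $1/8$ by the same count.
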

\begin{proof}
Let $\eta_m\in\C^{2N}$ have its first $m$ entries equal to one and all remaining
entries zero. From \eqref{eq:X},
\[
E_m\mathbf{X}=\eta_m\eta_m^*+\sum_{k=m+1}^N e_{kk}.
\]
Since the first $N$ coordinates of $v_j$ are the entries of row $j$ of $R$,
\begin{equation}\label{eq:coefficient}
\langle v_j,E_j\mathbf{X}v_j\rangle
=\left|\sum_{k=1}^jR_{jk}\right|^2
+\sum_{k=j+1}^N|R_{jk}|^2
\ge\left|\sum_{k=1}^jR_{jk}\right|^2.
\end{equation}
Write $h_r=\sum_{\ell=1}^r\ell^{-1}$ denote the \(r\)-th partial sum of the harmonic series, with $h_0=0$.
By the definition of $H$ matrix we have $H_{jj}=0$ and for $j\neq k$,  $H_{jk}=\frac{1}{\pi(j-k)}$.
So,
\begin{align*}
    \sum_{k=1}^{N}H_{jk}&=\sum_{k=1}^{j-1}H_{jk}+\sum_{k=j+1}^{N}H_{jk}\\
    &=\frac{1}{\pi}\sum_{k=1}^{j-1}\frac{1}{j-k}+\frac{1}{\pi}\sum_{k=j+1}^{N}\frac{1}{j-k}\\
    &=\frac{h_{j-1}}{\pi}-\frac{1}{\pi}\sum_{k=j+1}^{N}\frac{1}{k-j}\\
    &=\frac{h_{j-1}}{\pi}-\frac{h_{N-j}}{\pi}
\end{align*}
Thus full sum of row $j$ of $H$ is $(h_{j-1}-h_{N-j})/\pi$.
Since $R=H-H\frac{\eta\eta^*}{N}$,
\begin{equation}\label{eq:row-sum}
\begin{aligned}
\sum_{k=1}^jR_{jk}
&=\frac{h_{j-1}}{\pi}-\frac{j}{N\pi}(h_{j-1}-h_{N-j})\\
&=\frac1\pi\left[\left(1-\frac jN\right)h_{j-1}
+\frac jN h_{N-j}\right].
\end{aligned}
\end{equation}
For $j\in J_N$,  we get $j-1\geq \frac{N}{4}-1$  and $N-j\geq \frac{N}{4}-1$. Then both  $(j-1)$ and $(N-j)$ partial sum of the harmonic series in the last expression are at least
\[
h_{N/4-1}\ge\log(N/4)\ge\frac12\log N.
\]
Equations \eqref{eq:coefficient} and \eqref{eq:row-sum} give the lower bound,
$\langle v_j,E_j\mathbf{X}v_j\rangle\ge c(\log N)^2$. As $QE_j\mathbf{X}$ is positive and diagonal
in the basis containing $v_j$, the lower bound for its \(j\)-th diagonal entry gives exactly \eqref{eq:matrix-peak}. Finally,  as $|J_N|=N/4$ and each $v_jv_j^*$ has normalized trace $\frac{1}{2N}$ the rest of the proposition follows.
\end{proof}

\section{Finite-dimensional Markov maps lifting the martingale}
\subsection{The cyclic coordinate}
Let $\calC_N=\ell_\infty(\Z/(N+2)\Z)\otimes M_{2N}$
with the product trace
$\tau_{\calC_N}((f_i))=(N+2)^{-1}\sum_i\tr_{2N}(f_i)$. Define
\[
\phi_0=Q,\qquad
\phi_i=E_{N-i+1}\quad(1\le i\le N),\qquad
\phi_{N+1}=\id,
\]
and put
\begin{equation}\label{eq:TN}
(T_Nf)_i=\phi_i(f_{i+1}),\qquad i\in\Z/(N+2)\Z.
\end{equation}
The map $T_N$ is unital, completely positive and trace preserving.
Since every $\phi_i$ is self-adjoint for the trace pairing,
\begin{equation}\label{eq:TN-adjoint}
(T_N^*f)_i=\phi_{i-1}(f_{i-1}).
\end{equation}
Let $z_N=(\mathbf{X},0,0,....,0)\in\calC_N$ and by the definition of trace on $\calC_N$ we obtain,
\begin{align}\label{eq:z_N}
    \tau_{\calC_N}(z_N)&=\frac{1}{N+2}\tr_{2N}(X)=\frac{1}{2(N+2)}
\end{align}
\begin{proposition}\cite[Example 6.3.(c)]{AD2006}\label{pro:factorizable}
Let $I$ be a finite nonempty set, and let $(A,\psi)$ be a von Neumann algebra with a faithful normal state and let $q=(q_{ij})_{i,j\in I}$
be a transition matrix with a strictly positive stationary probability
distribution $\pi=(\pi_i)_{i\in I}$. Thus
\[
q_{ij}\geq 0,\qquad\sum_{j\in I}q_{ij}=1,\qquad\pi_i>0,\qquad\sum_{i\in I}\pi_i=1,
\qquad\sum_{i\in I}\pi_iq_{ij}=\pi_j.
\]
Suppose that each $P_i:(A,\psi)\to(A,\psi)$ is factorizable. On the von Neumann algebra
\[
B=\bigoplus_{i\in I}A,
\]
equipped with
\[
\varphi(b)=\sum_{i\in I}\pi_i\psi(b_i), \qquad b=(b_i)_{i\in I}\in B,
\]
define
\[
P(b)_i=\sum_{j\in I}q_{ij}P_i(b_j),
\qquad i\in I.
\]
Then $P$ is a factorizable $\varphi$-preserving Markov map.
\end{proposition}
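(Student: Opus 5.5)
We follow the dilation construction underlying \cite[Example~6.3(c)]{AD2006}, building the factorization of $P$ directly from those of the $P_i$ and of the classical chain $q$. Markovianity is checked first. $P$ is normal and completely positive, being a finite sum of compositions of the normal c.p.\ maps $P_i$ with coordinate projections and inclusions. It is unital because $\sum_j q_{ij}=1$. It preserves $\varphi$ because
\[
\varphi(P(b))=\sum_i\pi_i\sum_jq_{ij}\psi(P_i(b_j))=\sum_j\Bigl(\sum_i\pi_iq_{ij}\Bigr)\psi(b_j)=\sum_j\pi_j\psi(b_j).
\]
Here we used $\psi\circ P_i=\psi$. It therefore remains to prove factorizability.

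\textbf{Step 1: factor $P$ through the commutative dilation of $q$.} Write $P=P_{\mathrm{loc}}\circ Q_q$, where
\[
Q_q(b)_i=\sum_jq_{ij}b_j=(q\otimes\id_A)(b),\qquad P_{\mathrm{loc}}(b)_i=P_i(b_i).
\]
$Q_q$ is the classical Markov operator $q$ on $\ell_\infty(I)$, tensored with $\id_A$, acting on $B=\ell_\infty(I)\otimes A$. $P_{\mathrm{loc}}$ is the direct sum of the $P_i$. Both maps are $\varphi$-preserving Markov maps.

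\textbf{Step 2: each factor is factorizable.} For $P_{\mathrm{loc}}$, choose factorizations $P_i=j_{0,i}^*j_{1,i}$ through $(\calN_i,\sigma_i)$. Then $\bigoplus_i j_{0,i}$ and $\bigoplus_i j_{1,i}$ map $B$ into $\bigoplus_i\calN_i$ with state $\sum_i\pi_i\sigma_i$. These are state-preserving unital normal $*$-homomorphisms, and their composition $(\bigoplus_i j_{0,i})^*(\bigoplus_i j_{1,i})$ is $\bigoplus_i P_i$.

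For $Q_q$, use a measure-theoretic dilation. Let $\Omega=I\times I$ with the joint law $\mu(i,j)=\pi_iq_{ij}$. The two coordinate maps $\ell_\infty(I)\to L_\infty(\Omega,\mu)$, $f\mapsto f(\omega_1)$ and $f\mapsto f(\omega_2)$, are measure-preserving homomorphisms; the second one preserves measure by stationarity of $\pi$. The conditional expectation of $f(\omega_2)$ given $\omega_1=i$ equals $\sum_jq_{ij}f(j)$. Thus $q=j_0^*j_1$, and tensoring with $\id_A$ gives a factorization of $Q_q$ through $L_\infty(\Omega)\otimes A$. If the definition requires an honest homomorphic conjugacy (rather than just $j_0^*j_1$), the same works after tensoring with $(A,\psi)$.

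\textbf{Step 3: compose.} The remaining step is to check that a composition of factorizable maps is factorizable. This is the step I expect to need the most care. Given $S=a_0^*a_1$ through $\calN$ and $R=b_0^*b_1$ through $\calN'$, the standard approach is an amalgamated free product: form $\calN *_{B}\calN'$, amalgamating $a_0(B)$ with $b_1(B)$, with the free product state. Then $RS=b_0^*b_1a_0^*a_1$, and $b_1a_0^*$ agrees with the conditional expectation through the amalgamated algebra. Freeness with amalgamation gives $E_{\calN'}|_{\calN}=b_1\circ a_0^*$, and hence $RS=(\iota'b_0)^*(\iota a_1)$. This is the known composition permanence property (see \cite[Remark~1.4]{HM11}).

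In the tracial situation used later ($A=M_{2N}$ with trace, uniform $\pi$), the simpler route would be to avoid Step 3 entirely. Instead, I would directly construct $\calN=L_\infty(\Omega)\otimes\bigoplus_i\calN_i$-type algebras and check $j_0^*j_1=P$ by evaluating on $\omega_1=i$. The check reduces to the computation
\[
\sum_jq_{ij}\,j_{0,i}^*j_{1,i}(b_j)=P(b)_i.
\]
The obstacle there is that $j_1$ must use the \emph{same} index $i$ for the $P_i$ as for the starting point. To handle this, I would let $j_1(b)(\omega)=j_{1,\omega_1}(b_{\omega_2})$ and $j_0(b)(\omega)=j_{0,\omega_1}(b_{\omega_1})$. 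Both are homomorphisms into $L_\infty(\Omega;\bigoplus_i\calN_i)$ with the correct states, and $j_0^*j_1$ yields exactly $P$.
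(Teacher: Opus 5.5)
Your argument is correct. Note that the paper gives no proof of this statement; it is imported from \cite[Example~6.3(c)]{AD2006}, so your write-up is a self-contained substitute rather than a variant of an argument in the paper.

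Your main route is sound. You write $P=P_{\mathrm{loc}}\circ Q_q$, dilate each factor, and recombine via the amalgamated free product over $a_0(B)\cong b_1(B)$, where freeness gives $E_{\calN'}\circ\iota=\iota'\circ b_1\circ a_0^*$. However, it uses free-product machinery only to glue two dilations that can be built at once.

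Your last paragraph does exactly that and is the cleaner proof; you only need to make the target precise. Take $\calN=\bigoplus_{(i,j):\,q_{ij}>0}\calN_i$ with state $\chi=\sum_{(i,j)}\pi_iq_{ij}\sigma_i$, and set $j_0(b)_{(i,j)}=j_{0,i}(b_i)$, $j_1(b)_{(i,j)}=j_{1,i}(b_j)$. Two details matter here. The fibre over $(i,j)$ must be $\calN_i$ rather than $\bigoplus_k\calN_k$, or $j_0,j_1$ are not unital. Pairs with $q_{ij}=0$ must be dropped, or $\chi$ is not faithful. Then:
\begin{itemize}
\item $\chi\circ j_0=\varphi$ because $\sum_jq_{ij}=1$;
\item $\chi\circ j_1=\varphi$ by stationarity of $\pi$;
\item $j_0^*(n)_i=\sum_jq_{ij}\,j_{0,i}^*(n_{(i,j)})$, and so $j_0^*j_1=P$.
\end{itemize}
Nothing here uses traciality or uniform $\pi$; in the non-tracial setting, modular compatibility of $j_0,j_1$ is inherited coordinatewise. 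So your Steps 1--3 become unnecessary.

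In the only case the paper needs, Lemma~\ref{lem:cyclic} with $q_{ij}=\delta_{j,i+1}$, this reduces to $\calN=\bigoplus_i\calN_i$ with $j_1(b)_i=j_{1,i}(b_{i+1})$. That gives factorizability of $T_N$ in one line once each $\phi_i$ is known to be factorizable.
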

\begin{lemma}\label{lem:cyclic}
The map $T_N$ is factorizable.
If $A_{N,n}=T_N^n(T_N^*)^n$, then $A_{N,n}z_N$ is supported at coordinate zero
for every $n\ge1$, and
\begin{equation}\label{eq:realization}
(A_{N,n}z_N)_i =
\begin{cases}
QE_{N-n+2}\mathbf{X}, & \text{if } i=0,\\
0, & \text{if } i\neq 0,
\end{cases}
\qquad 2\le n\le N+1.
\end{equation}
All the operators $z_N,A_{N,1}z_N,A_{N,2}z_N,\ldots$ belong to $
\ell_\infty(\Z/(N+2)\Z)\otimes\calD_{\calV_N}.$
\end{lemma}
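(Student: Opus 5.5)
The plan is to obtain factorizability from Proposition~\ref{pro:factorizable} and to prove the remaining claims by tracking a single nonzero coordinate under powers of $T_N^*$ and of $T_N$.

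\textbf{Factorizability.} Apply Proposition~\ref{pro:factorizable} with $I=\Z/(N+2)\Z$ and $A=M_{2N}$ with $\psi=\tr_{2N}$. Take the deterministic cyclic transition matrix $q_{ij}=\delta_{j,i+1}$, whose stationary distribution is uniform, $\pi_i=1/(N+2)$; then $\varphi=\tau_{\calC_N}$. Take $P_i=\phi_i$. With these choices $P(b)_i=\phi_i(b_{i+1})=(T_Nb)_i$.

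It remains to see that each $\phi_i$ is factorizable. The identity is trivially factorizable. For $Q$ and $E_m$ I would show that each is an average of finitely many inner automorphisms, and then use the standard fact that convex combinations of $\operatorname{Ad}(u)$ are factorizable (cf.\ \cite{AD2006,HM11}).
\begin{itemize}
\item For $Q$: let $G$ be the finite group of unitaries $V\operatorname{diag}(\varepsilon_1,\dots,\varepsilon_{2N})V^*$ with $\varepsilon_k=\pm1$. Averaging $uAu^*$ over $u\in G$ kills exactly the off-diagonal entries of $V^*AV$, so this average equals $Q(A)$.
\item For $E_m$: average $\operatorname{Ad}(u)$ over the unitaries $u=\varepsilon_0 1_m\oplus\operatorname{diag}(\varepsilon_{m+1},\dots,\varepsilon_{2N})$ with all $\varepsilon$'s equal to $\pm1$. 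This average kills the off-diagonal blocks and the off-diagonal entries of the lower corner, and it keeps the upper $m\times m$ block, which is \eqref{mthconditionalexpec}.
\end{itemize}
If a direct citation for convex combinations of inner automorphisms is unavailable, I would instead factor a conditional expectation through the tracial amalgamated free product $M_{2N}*_{\calD}M_{2N}$ with its two canonical embeddings. I expect this justification to be the only nonroutine point.

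\textbf{Computation of $A_{N,n}z_N$.} By \eqref{eq:TN-adjoint}, $T_N^*$ moves an element supported at coordinate $i$ to coordinate $i+1$ and applies $\phi_i$. By induction, $(T_N^*)^nz_N$ is supported at coordinate $n \bmod (N+2)$, with value $\phi_{n-1}\cdots\phi_1\phi_0\mathbf X$ (indices mod $N+2$). Dually, by \eqref{eq:TN}, $T_N$ moves coordinate $i+1$ back to $i$ and applies $\phi_i$. Hence
\[
(T_N^nf)_0=\phi_0\phi_1\cdots\phi_{n-1}(f_n),
\]
and $T_N^nf$ is supported at $0$ whenever $f$ is supported at $n$. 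So $A_{N,n}z_N$ is supported at coordinate $0$ for every $n\ge1$, including after wrap-around.

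For $2\le n\le N+1$ all indices satisfy $1\le i\le n-1\le N$, so no wrap-around occurs. Since $Q\mathbf X=\mathbf X$ by \eqref{eq:X-properties} and the $E_m$ nest as in \eqref{eq:nesting},
\[
\phi_{n-1}\cdots\phi_0\mathbf X=E_{N-n+2}\cdots E_N\mathbf X=E_{N-n+2}\mathbf X.
\]
Applying $\phi_0\cdots\phi_{n-1}$ and using \eqref{eq:nesting} once more gives $QE_{N-n+2}\mathbf X$, which is \eqref{eq:realization}.

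\textbf{Membership in $\ell_\infty(\Z/(N+2)\Z)\otimes\calD_{\calV_N}$.} We have $z_N\in\ell_\infty\otimes\calD_{\calV_N}$ because $\mathbf X=Nv_0v_0^*\in\calD_{\calV_N}$. For $n\ge1$, the only nonzero coordinate of $A_{N,n}z_N$ is coordinate $0$, and its value has $\phi_0=Q$ applied last, so it lies in $\calD_{\calV_N}$.
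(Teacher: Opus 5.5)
Your proposal is correct and follows the paper's skeleton. Factorizability of $T_N$ comes from Proposition~\ref{pro:factorizable} with the deterministic cyclic transition matrix and uniform stationary law. Formula \eqref{eq:realization} comes from following the shift structure of $T_N$ and $T_N^*$, using $Q\mathbf X=\mathbf X$ and the nesting \eqref{eq:nesting}. The paper packages the second part as the palindromic identity $(A_{N,n}f)_i=\phi_i\cdots\phi_{i+n-1}\phi_{i+n-1}\cdots\phi_i(f_i)$ for arbitrary $f$, which shows at once that $A_{N,n}$ acts coordinatewise. Your support-tracking for $z_N$ is the same computation.

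The one genuine difference is how you show that each $\phi_i$ is factorizable. The paper cites Ricard's Markov dilation for self-adjoint Schur multipliers \cite{Ric08}. You instead write $Q$ and $E_m$ as uniform averages of $\operatorname{Ad}(u)$ over finite groups of sign unitaries, and both averaging computations are right.

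Your route is elementary and self-contained, and it handles $Q$ directly. The citation, by contrast, implicitly needs two extra remarks: that $Q=\operatorname{Ad}(V)\circ\Delta\circ\operatorname{Ad}(V^*)$ is a unitary conjugate of a Schur multiplier, and that such conjugation preserves factorizability. Ricard's theorem covers a much larger class of maps than is needed here.

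The step you flag as the only nonroutine point is in fact immediate, so the amalgamated free product fallback is unnecessary. Take $\calN=M_{2N}\otimes\ell_\infty^K$ with trace $\tr_{2N}\otimes\lambda$, where $\lambda=(\lambda_k)$ is the strictly positive weight vector, and let $\delta_k$ be the minimal projections of $\ell_\infty^K$. Set $j_0(a)=a\otimes 1$ and $j_1(a)=\sum_k u_kau_k^*\otimes\delta_k$. Both are unital trace-preserving $*$-homomorphisms, and $j_0^*j_1=\sum_k\lambda_k\operatorname{Ad}(u_k)$.
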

\begin{proof}
By \cite{Ric08} every $\phi_i$ is factorizable. Now, we define, $I=\Z/(N+2)\Z$ and  $\pi_i=\frac{1}{N+2}$  ,  \[
{q_{ij}=
\begin{cases}
1, & j=i+1 \\
0, & \text{otherwise}.
\end{cases}
}
\]
where $i,j\in I $. Then we obtain, 
\[
q_{ij}\geq 0,\qquad\sum_{j\in I}q_{ij}=1,\qquad\pi_i>0,\qquad\sum_{i\in I}\pi_i=1,
\qquad\sum_{i\in I}\pi_iq_{ij}=\frac{1}{N+2}=\pi_j.
\]
 Thus by \ref{eq:TN}, we have
 \[
 (T_Nf)_i=\phi_i(f_{i+1})=\sum_{j\in I}q_{ij}\phi_i(f_j),\qquad i\in I.
 \]
Now using \ref{pro:factorizable} on $T_N$ we get, $T_N$ is factorizable.\\
Let $f\in\mathcal{C}_N$. Applying \eqref{eq:TN} and \eqref{eq:TN-adjoint} repetitively, we obtain for every $n\ge1$,
\begin{equation}\label{eq:palindrome}
(A_{N,n}f)_i
=\phi_i\phi_{i+1}\cdots\phi_{i+n-1}
\phi_{i+n-1}\cdots\phi_{i+1}\phi_i(f_i),
\end{equation}
with all indices read modulo $(N+2)$. Note that it follows from \eqref{eq:palindrome} that $(A_{N,n}z_N)_i=0$ whenever $i\neq 0.$ 
For $2\le n\le N+1$, we use the martingale property \eqref{eq:nesting} in \eqref{eq:palindrome} to obtain
\[
(A_{N,n}z_N)_0=QE_{N-n+2}Q\mathbf{X}=QE_{N-n+2}\mathbf{X}.
\] Moreover, as range of $Q$ is $\mathcal{D}_{\mathcal{V}_N}$ we see that $z_{N}, A_{N,1}z_N,...\in \ell_\infty(\Z/(N+2)\Z)\otimes\calD_{\calV_N}.$ This completes the proof.
\end{proof}

\subsection{Amplification of the exceptional set}
Let us set
\begin{equation}\label{eq:amplification-def}
\calB_N=\calC_N^{\otimes (N+2)},\qquad
S_N=T_N^{\otimes (N+2)},\qquad
Y_N=\sum_{a=1}^{N+2} z_N^{(a)}.
\end{equation}
Here $z_N^{(a)}$ denotes $z_N$ in tensor factor $a$ and the identity in all other
factors. Let
\[
B_{N,n}=S_N^n(S_N^*)^n,\qquad \calG_N=(\ell_\infty(\Z/(N+2)\Z)\otimes\calD_{\calV_N})^{\otimes (N+2)}.
\]
By Proposition~\ref{prop:permanence}, $S_N$ is factorizable. Unitality gives
\begin{equation}\label{eq:amplified-images}
B_{N,n}Y_N=\sum_{a=1}^{N+2}(A_{N,n}z_N)^{(a)}\in(\calG_N)_+.
\end{equation}
\begin{proposition}\label{prop:amplification}
There is a projection $F_N\in\calG_N$ such that
\begin{align}
\tau_{\calB_N}(Y_N)&=\frac12,
&0\le Y_N&\le (N+2)N\,1,\label{eq:Y-properties}\\
\tau_{\calB_N}(F_N)&\ge\beta:=1-e^{-1/8},\label{eq:F-trace}\\
\max_{N/2\le n\le N+1}B_{N,n}Y_N
&\ge c(\log N)^2\quad\text{on }F_N.\label{eq:amplified-peak}
\end{align}
The maximum and the last inequality are taken in the abelian algebra
$\calG_N$. Moreover, if
\[
q_N=\log(e+(N+2)N),\qquad Z_N=Y_N/q_N,
\]
then
\begin{equation}\label{eq:Z-modular}
\tau_{\calB_N}\Phi(Z_N)\le\frac12,\qquad q_N\le3\log N.
\end{equation}
\end{proposition}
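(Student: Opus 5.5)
The plan is to treat the four assertions in turn; only the trace bound \eqref{eq:F-trace} needs a real idea.

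For \eqref{eq:Y-properties}, I use that the product trace on $\calB_N$ satisfies $\tau_{\calB_N}(z_N^{(a)})=\tau_{\calC_N}(z_N)$. By \eqref{eq:z_N} this gives $\tau_{\calB_N}(Y_N)=(N+2)\cdot\frac{1}{2(N+2)}=\frac12$. Positivity is immediate since each $z_N^{(a)}\ge0$. For the upper bound I use $\norm{z_N^{(a)}}_\infty=\norm{\mathbf X}_\infty=N$ from \eqref{eq:X-properties}, so $Y_N\le (N+2)N\,1$.

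For the peak set, I work in the abelian algebra $\calG_N$. In the $a$-th tensor factor, let $p$ be the projection $(P,0,\ldots,0)\in\ell_\infty(\Z/(N+2)\Z)\otimes\calD_{\calV_N}$, where $P=\sum_{j\in J_N}v_jv_j^*$. By Proposition~\ref{prop:matrix-peaks}, $\tr_{2N}(P)=1/8$, so $\tau_{\calC_N}(p)=\frac{1}{8(N+2)}$. Now fix $j\in J_N$ and set $n=N-j+2$. As $j$ ranges over $J_N=\{N/4,\dots,N/2-1\}$, $n$ ranges over $\{N/2+3,\dots,3N/4+2\}\subseteq[N/2,N+1]$. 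By Lemma~\ref{lem:cyclic}, $(A_{N,n}z_N)_0=QE_j\mathbf X$, and by \eqref{eq:matrix-peak} this is $\ge c(\log N)^2v_jv_j^*$. The projections $v_jv_j^*$ are orthogonal and sum to $P$, and everything commutes in $\calD_{\calV_N}$. Hence $\max_{N/2\le n\le N+1}A_{N,n}z_N\ge c(\log N)^2 p$.

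Set $F_N=1-\bigotimes_a(1-p^{(a)})$, the supremum of the $p^{(a)}$. By \eqref{eq:amplified-images}, $B_{N,n}Y_N\ge (A_{N,n}z_N)^{(a)}$ for each $a$, since the remaining summands are positive. So on $p^{(a)}$ the maximum over $n$ is at least $c(\log N)^2$. In the abelian algebra this holds on the supremum $F_N$, which proves \eqref{eq:amplified-peak}. By independence of the factors,
\[
\tau(F_N)=1-\Bigl(1-\frac{1}{8(N+2)}\Bigr)^{N+2}\ge1-e^{-1/8}=\beta .
\]
The main point is exactly this tensor-independence amplification: a single factor's peak set of trace $1/(8(N+2))$ is boosted to constant trace. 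One has to check that the maximum is taken pointwise in $\calG_N$, so that a union bound across factors is legitimate.

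For \eqref{eq:Z-modular}, note that $0\le Z_N\le (N+2)N/q_N\le (N+2)N$. Since $t\mapsto\log(e+t)$ is increasing, $\Phi(Z_N)=Z_N\log(e+Z_N)\le Z_N\,q_N=Y_N$ by functional calculus. Therefore $\tau\Phi(Z_N)\le\tau(Y_N)=\frac12$. Finally, for $N\ge16$ we have $e+(N+2)N\le N^3$, which gives $q_N\le3\log N$.
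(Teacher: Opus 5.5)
Your proposal is correct and follows essentially the same route as the paper: the same choice $n=N-j+2$, the same peak projection $F_N=\bigvee_a F_{N,a}$ with the independence computation of its trace, and the same bound $\Phi(Z_N)\le Z_N q_N=Y_N$. The only difference is presentational. You verify the peak inequality through operator inequalities in the abelian algebra, dominating $B_{N,n}Y_N$ by the single positive summand $(A_{N,n}z_N)^{(a)}$, whereas the paper evaluates eigenvalues on a tensor eigenbasis.
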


\begin{proof}
By \eqref{eq:z_N}, $\tau_{\calC_N}(z_N)=1/(2(N+2))$ and $0\le z_N\le N1$.
This proves, 
\begin{align*}
    \tau_{\calB_N}(Y_N)&=\tau_{\calB_N}(\sum_{a=1}^{N+2} z_N^{(a)})
    =\sum_{a=1}^{N+2}\tau_{\calB_N}( z_N^{(a)})
    =\sum_{a=1}^{N+2}\tau_{\calC_N}(z_N)=\frac{1}{2}.
\end{align*}
and $0\leq Y_N\leq \sum_{a=1}^{N+2}N1=(N+2)N1$
. In factor $a$, let $F_{N,a}$ be the copy of
the projection with zeroth coordinate $\sum_{j\in J_N}v_jv_{j}^*$ and all other
coordinates zero i.e,
\[
F_{N,a}=1\otimes\cdots\otimes\underbrace{(\sum_{j\in J_N}v_jv_{j}^*,0,\cdots,0)}_{a\text{-th factor}}
\otimes\cdots\otimes1
\]
The $(N+2)$ projections $F_{N,a}$ are independent in $\calG_N$,
and each has trace $1/(8(N+2))$. Thus, for $F_N=\bigvee_{a=1}^{(N+2)}F_{N,a}$,  and as $F_{N,a}$ are commuting projection then, 
\[ 
F_{N}^{\perp}=\bigwedge_{a=1}^{N+2}F_{N,a}^{\perp}=\prod_{a=1}^{N+2}F_{N,a}^{\perp}
\]
and we get,
\begin{align*}
\tau_{\calB_N}(F_N)&=1-\prod_{a=1}^{N+2}\tau_{\calB_N}(F_{N,a}^{\perp})
=1-\left(1-\frac1{8(N+2)}\right)^{(N+2)}\ge1-e^{-1/8}.
\end{align*}
The algebra $\calG_N$ acts diagonally on $\left(\mathbb C^{N+2}\otimes\mathbb C^{2N}\right)^{\otimes (N+2)}$ which has orthonormal basis consisting of vectors of the form
\[
\xi=\bigotimes_{r=1}^{N+2}
\left(\varepsilon_{i_r}\otimes v_{k_r}\right).
\]
where $\varepsilon_{i_r}$ be the standard basis of $\mathbb C^{N+2}$, $i_r\in\{1,\dots,N+2\}$ and $k_r\in\{0,\dots,2N-1\}$.
By the given definition of $F_{N,a}$ we have ,
\[
{F_{N,a}\xi=\begin{cases}\xi, & i_a=0 \text{ and } k_a\in J_N,\\
0, & \text{otherwise}.\end{cases}}
\]
Therefore we obtain,
\[
{F_N\xi=\begin{cases}\xi,& \text{if there exists }a\in\{1,\ldots,N+2\}
\text{ with }i_a=0\text{ and }k_a\in J_N,\\[2pt]
0,& \text{otherwise}.
\end{cases}
}
\]
We will be working with $\xi$ such that $F_N\xi=\xi$. Choose
\[
n(j)=N-j+2.
\]
This integer lies between $N/2$ and $N+1$. Consequently,by  \eqref{eq:realization} the action of the \(a\)-th summand is
\[
(A_{N,n(j)}z_N)^{(a)}\xi=\langle v_{k_a},E_{k_a}Xv_{k_a}\rangle\,\xi
\]
For each \(r\ne a\), the operator\((A_{N,n(j)}z_N)^{(r)}\) is also positive and diagonal in this tensor basis. Therefore
\[
(A_{N,n(j)}z_N)^{(r)}\xi=\lambda_r\xi, \qquad \lambda_r\ge0.
\]
Now using \eqref{eq:amplified-images} we obtain,
\[
\begin{aligned}
B_{N,n(j)}Y_N\,\xi&=\sum_{r=1}^{N+2}(A_{N,n(j)}z_N)^{(r)}\xi\\
&=\left(\langle v_j,E_jXv_j\rangle+\sum_{r\ne a}\lambda_r\right)\xi.
\end{aligned}
\]
Thus by Proposition~\ref{prop:matrix-peaks} its eigenvalue on \(\xi\) is at least \(c(\log N)^2\). Therefore, we have
 \[
\left(\max_{N/2\le n\le N+1}B_{N,n}Y_N\right)\xi=d_{\xi}\xi,
\qquad d_\xi\ge c(\log N)^2.
\]
Since the above inequality is true for any $\xi\in\text{Ran}F_{N},$ this proves \eqref{eq:amplified-peak}.\\
Finally, $q_N\ge1$ and $0\le Y_N\le (N+2)N1$. Therefore, as $\Phi$ is increasing
\[
\Phi(Y_N/q_N)\le\frac{Y_N}{q_N}\log(e+(N+2)N)=Y_N,
\]
which proves \eqref{eq:Z-modular}. Since $e+N(N+2)\le N^3$ for $N\ge16$,
the remaining estimate follows.
\end{proof}

\section{Gluing the finite-dimensional examples via tensor product}

\begin{proof}[Proof of Theorem~\ref{thm:main}]
For $k\ge1$, choose
\[
N_k=2^{4^k}.
\]
Each $N_k$ is divisible by four and at least sixteen.
Form the tracial infinite tensor products
\begin{equation}\label{eq:infinite-algebras}
(\calM,\tau)=\overline{\bigotimes}_{k\ge1}(\calB_{N_k},\tau_{\calB_{N_k}}),
\qquad
\calG=\overline{\bigotimes}_{k\ge1}\calG_{N_k}\subseteq\calM.
\end{equation}
The algebra $\calM$ is finite and hyperfinite, because the increasing  finite tensor products are finite-dimensional and their union is weakly dense. The algebra $\calG$ is abelian. Let
\[
T=\bigotimes_{k\ge1}S_{N_k}
\]
be the factorizable Markov map described in Proposition~\ref{prop:permanence}.
For the canonical embedding we define,
\[
\iota_k(a)=1^{\otimes(k-1)}\otimes a\otimes1\otimes\cdots,\qquad a\in B_{N_k}.
\]
The operators $Y_{N_k}$ and $Z_{N_k}, F_{N_k}$ identify as, the image under $\iota_k$ for  elements in $\calM$.\\
Define
\begin{equation}\label{eq:input}
x=\sum_{k\ge1}2^{-k}Z_{N_k}
=\sum_{k\ge1}\frac{2^{-k}}{q_{N_k}}Y_{N_k}.
\end{equation}
Clearly
\eqref{eq:Y-properties}, the above series defines a positive element of
$L_1(\calG)$. Then by convexity of $\Phi$ and
\eqref{eq:Z-modular} give
\[
\tau\Phi\left(\sum_{k=1}^m2^{-k}Z_{N_k}\right)
\le\sum_{k=1}^m2^{-k}\tau\Phi(Z_{N_k})\le\frac12.
\]
By Monotone convergence theorem we obtain
\eqref{eq:main-modular}.
Let $A_n=T^n(T^*)^n$. Note that
\begin{equation}\label{eq:infinite-images}
A_nx=\sum_{k\ge1}\frac{2^{-k}}{q_{N_k}}B_{N_k,n}Y_{N_k}
\in L_1(\calG)_+.
\end{equation}
On $F_{N_k}$, equations \eqref{eq:amplified-peak}, \eqref{eq:Z-modular} and
\eqref{eq:infinite-images} give
\begin{equation}\label{eq:divergent-peaks}
\begin{aligned}
\max_{N_k/2\le n\le N_k+1}A_nx
&\ge\frac{c\,2^{-k}(\log N_k)^2}{q_{N_k}}\\
&\ge\frac c3\,2^{-k}\log N_k
=\frac{c\log2}{3}\,2^k.
\end{aligned}
\end{equation}
The projections $F_{N_k}$ clearly are
independent in $\calG$ if we identify $\calG$ as a classical measure space. Each has trace at least $\beta>0$. Therefore, by Borel-Cantelli lemma almost every point in the underlying measure space of $\mathcal G$ belongs to infinitely many $F_{N_k}$. Therefore, from
\eqref{eq:divergent-peaks} we obtain
\[
\limsup_{n\to\infty}A_nx=\infty
\qquad\text{almost everywhere in }\calG.
\]
Lemma~\ref{lem:compression} proves \eqref{eq:main-unbounded} and the failure
of b.a.u. convergence.
\end{proof}

\section{The endpoint claims in a recent preprint}

Bikram \cite[Theorems~1.4 and~1.6]{Bikram2024} claims to extend Bufetov's
pointwise ergodic theorem for even spherical averages and the
noncommutative Rota theorem to noncommutative $L\log L$-spaces.
In particular, the $L\log L$ specialization in \cite[Theorem~1.6]{Bikram2024}
asserts that, for every factorizable trace-preserving Markov map $T$ on
a finite tracial von Neumann algebra $(M,\tau)$, the sequence
$T^n(T^*)^n(x)$ converges b.a.u.\ for every $x\in L\log L(M,\tau)$.
The argument in \cite{Bikram2024} is formulated for Orlicz functions $\Phi$
such that $\Phi^{1/p}$ is convex for some $p>1$
(\cite[Definition~3.7]{Bikram2024}). The $L\log L$ specialization rests on
\cite[Example~3.8(2)]{Bikram2024}, which asserts that
$\Psi(t)=t\log(1+t)$ is $p$-convex for $p=10/9$. We show that this
is not the case, and that the conditional results of \cite{Bikram2024} do
not reach beyond $L_p$.

\begin{proposition}\label{prop:pconvex}
Let $\Phi$ be an Orlicz function such that $g=\Phi^{1/p}$ is convex
for some $p>1$. Then $\Phi(t)\ge \Phi(1)\,t^p$ for all $t\ge1$.
In particular, $\Psi(t)=t\log(1+t)$ is not $p$-convex for any $p>1$.
\end{proposition}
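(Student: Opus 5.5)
Since $g=\Phi^{1/p}$ is convex, the plan is to compare $g$ with the chord through the points $0$ and $1$ and then raise the resulting inequality to the power $p$. For an Orlicz function we have $\Phi(0)=0$, and $\Phi$ is nonnegative and nondecreasing, so $g(0)=0$ and $g\ge0$. Convexity of $g$ then makes the difference quotient $t\mapsto g(t)/t=(g(t)-g(0))/(t-0)$ nondecreasing on $(0,\infty)$. In particular, for $t\ge1$ we get $g(t)/t\ge g(1)/1$, that is, $g(t)\ge g(1)\,t$. Both sides are nonnegative, so raising to the power $p$ gives
\[
\Phi(t)=g(t)^p\ge g(1)^p\,t^p=\Phi(1)\,t^p,\qquad t\ge1.
\]
The only point needing a line of justification is the monotonicity of the slope from the origin. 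This is the standard three-slope inequality: for $0<s<t$, write $s=(s/t)\,t+(1-s/t)\,0$, so convexity gives $g(s)\le (s/t)\,g(t)+(1-s/t)\,g(0)=(s/t)\,g(t)$.

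For the second assertion, take $\Psi(t)=t\log(1+t)$. It is an Orlicz function, since it is continuous, convex, increasing and vanishes at $0$, and $\Psi(1)=\log2>0$. Suppose $\Psi^{1/p}$ were convex for some $p>1$. The first part would give $t\log(1+t)\ge(\log 2)\,t^p$ for all $t\ge1$. Dividing by $t$ gives $\log(1+t)\ge(\log2)\,t^{p-1}$. However, $\log(1+t)/t^{p-1}\to0$ as $t\to\infty$ because $p-1>0$, which is a contradiction. Hence $\Psi$ is not $p$-convex for any $p>1$. In particular this fails for $p=10/9$, contradicting \cite[Example~3.8(2)]{Bikram2024}.

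I expect no serious obstacle; the argument is elementary. The one check is that the definition of Orlicz function in use gives $\Phi(0)=0$ and $\Phi\ge0$, so that $g(0)=0$ and the chord from the origin is the right comparison. If only $\Phi(0)\ge0$ were available, one would use $g(t)\ge g(0)+(g(1)-g(0))t\ge(g(1)-g(0))\,t$, which still yields polynomial growth of order $t^p$ as long as $\Phi$ is not constant on $[0,1]$. The conclusion is the same: $\Phi$ grows at least like $t^p$, so the hypothesis of \cite{Bikram2024} forces $L_\Phi\subseteq L_p$ on a finite algebra, and those results never reach $L\log L$.
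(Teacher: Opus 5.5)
Your proof is correct and follows the paper's argument. Convexity of $g$ with $g(0)=0$ makes $g(t)/t$ nondecreasing, hence $g(t)\ge g(1)\,t$ for $t\ge1$, and you then raise to the power $p$; for $\Psi$ the paper gets the contradiction directly from $g_p(t)/t\to0$ for a positive nondecreasing function, rather than from the derived bound $\Psi(t)\ge(\log 2)\,t^p$, which is the same observation.
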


\begin{proof}
Since $g$ is convex and $g(0)=0$, for $0<s<t$ we have
$g(s)\le \tfrac{s}{t}\,g(t)$, i.e.\ $g(t)/t$ is nondecreasing.
Taking $s=1$ gives $g(t)\ge g(1)t$ for $t\ge1$, and raising to the
power $p$ gives the first claim. For $\Psi$ and $g_p=\Psi^{1/p}$,
\[
\frac{g_p(t)}{t}=t^{\frac1p-1}\bigl(\log(1+t)\bigr)^{1/p}\longrightarrow 0
\quad (t\to\infty),
\]
while $g_p(t)/t>0$. A positive nondecreasing function cannot tend to $0$.
\end{proof}
Now we show that the $p$-convex class \cite{Bikram2024} does not go beyond $L_p.$
Let $\tau(1)<\infty$ and let $\Phi$ be $p$-convex for some $p>1$.
By Proposition~\ref{prop:pconvex}, $|x|^p\le \Phi(1)^{-1}\Phi(|x|)$ on
$\{|x|\ge1\}$, so for $\|x\|_\Phi\le1$,
\[
\tau(|x|^p)\le \tau(1)+\Phi(1)^{-1}\tau(\Phi(|x|))
\le \tau(1)+\Phi(1)^{-1},
\]
and  $L_\Phi(M,\tau)\subset L_p(M,\tau)$ continuously, where $L_\Phi(M,\tau)$ is the noncommutative Orlicz space associated with $\Phi.$
Thus, for finite $\tau$, the conclusion of \cite[Theorem~1.6]{Bikram2024}
for $L_\Phi(M,\tau)$ is already contained in the noncommutative $L_p$-Rota theorem for $p>1$  of
\cite{AD2006}. Similarly, if $\widetilde\Phi(t)=\Phi(\sqrt t)$ is
$p$-convex, then $\Phi(s)\ge c\,s^{2p}$ for $s\ge1$, so
$L_\Phi\subset L_{2p}$ with $2p>2$, which is covered by the a.u.\
convergence result in \cite[Corollary 4.2 ]{AD2006}. Consequently, for finite traces the Rota convergence conclusions in \cite[Theorem 1.6]{Bikram2024}, under its stated convexity hypotheses, follow from the corresponding noncommutative $L_p$-convergence theorem in \cite{AD2006}.

\begin{remark}
Let $p>1.$ The failure occurs at the estimate used to obtain equicontinuity in
\cite[Lemma~3.9, Eq.~(7)]{Bikram2024}. For fixed $\delta>0$ the argument
requires a constant $C_\delta$ with
$\lambda\le C_\delta\,\Psi(\lambda)^{1/p}$ for all $\lambda\ge\delta/2$.
For $\Psi(\lambda)=\lambda\log(1+\lambda)$, however,
\[
\frac{\lambda}{\Psi(\lambda)^{1/p}}
=\frac{\lambda^{1-1/p}}{(\log(1+\lambda))^{1/p}}\longrightarrow\infty,
\]
so no such constant exists. Hence the $L\log L$ specialization does
not follow from \cite{Bikram2024}.
\end{remark}

\begin{sloppypar}
\textbf{Acknowledgements:} The third-named author acknowledges the support of
 the Prime Minister Early Career Research Grant
(ANRF/\allowbreak ECRG/\allowbreak 2024/\allowbreak 000699/\allowbreak PMS), and the ANRF grant
ANRF/\allowbreak ARGM/\allowbreak 2025/\allowbreak 000895/\allowbreak MTR.
\end{sloppypar}

\textbf{AI declaration:} The authors identified the failure of the $p$-convexity assumption in an earlier argument for the $L\log L$ endpoint and started investigating the endpoint problem. ChatGPT 6.1 Plus (OpenAI) was used for mathematical exploration and LaTeX editing during the preparation of the manuscript. Suggestions arising from these AI interactions were reviewed, reformulated, and independently verified by the authors before being incorporated into the manuscript. The authors take full responsibility for all mathematical content in the final manuscript.

\end{document}